\newtheorem{theorem}{Theorem}[section]
\newtheorem{definition}[theorem]{Definition}
\newtheorem{proposition}[theorem]{Proposition}
\newtheorem{remark}[theorem]{Remark}
\newtheorem{example}[theorem]{Example}
\definecolor{shadecolor}{gray}{0.85}
\newenvironment{proof}[1][\proofname]{\par
  \normalfont
  \topsep6\p@\@plus6\p@ \trivlist
  \item[\hskip\labelsep{\bfseries #1}\@addpunct{\bfseries}]\ignorespaces
}{
  \endtrivlist
}
\newcommand{\proofname}{\textit{Proof.}}
\newcommand{\ctext}[1]{\raise0.2ex\hbox{\textcircled{\scriptsize{#1}}}}
\def\qed{\hfill$\Box$}
\title{On Gauss-Bonnet type formulas for mappings between surfaces with boundary and their applications}
\author{Kyoya Hashibori\footnote{hashibori.kyoya.a7@elms.hokudai.ac.jp}}
\date{Department of Mathematics, Graduate School of Science, Hokkaido University, Kita-10 Nishi-8, Kita-ku, Sapporo 060-0810, JAPAN}
\begin{document}

\maketitle

\begin{abstract}

We define singular points of the first kind and singular points of the second kind as singular points of mappings between surfaces. Typical examples of these singular points are fold singular points and cusp singular points, respectively. We can construct coherent tangent bundles, which are natural intrinsic formulations of wave fronts, by using mappings between surfaces. It is known that two types of Gauss-Bonnet type formulas hold for coherent tangent bundles over surfaces (possibly with boundary). Hence, we obtain two Gauss-Bonnet type formulas for mappings between surfaces (possibly with boundary). By applying these Gauss-Bonnet type formulas, we prove the Levine formula that relates the rotation indices to the Euler characteristic, and the Quine-Fukuda-Ishikawa formula that relates the mapping degree to the Euler characteristic and the number of singular points.
\end{abstract}

\section{Introduction}

The classical Gauss-Bonnet theorem claims that the integral of the Gaussian curvature of a regular surface $M$ is equal to the Euler characteristic $\chi(M)$:
\begin{equation}
\int_MKdA=2\pi\chi(M),\label{1.1}
\end{equation}
where $dA$ is the area form on $M$.

Saji, Umehara, and Yamada used coherent tangent bundles, natural intrinsic formulations of wave fronts, to generalize $(\ref{1.1})$ to wave fronts, which are surfaces that admit singular points (see \cite{6,7,8,9}, Definition $\ref{def2.9}$). Later, Domitrz and Zwierzy\'{n}ski generalized the theory of coherent tangent bundles to the case of surfaces with boundary, and they generalized the Gauss-Bonnet type formulas derived by Saji, Umehara, and Yamada to the case of surfaces with boundary (see \cite{2}).

Since coherent tangent bundles are intrinsic formulations of wave fronts, there are examples other than wave fronts that induce coherent tangent bundles, one of which is a mapping between surfaces (see \cite{8}, Example $\ref{ex2.10}$). Therefore, the Gauss-Bonnet type formulas are derived for mappings between surfaces. The following assertion is the Gauss-Bonnet type formulas for mappings between surfaces:

\begin{theorem}[Gauss-Bonnet type formulas]
\label{thm2.18}

{\it Let $M$ be a compact oriented surface with boundary, $(N,g)$ an oriented Riemannian surface (possibly with boundary), and $f:M\to N$ a $C^\infty$-map. We suppose that $f$ admits only singular points of the first kind and admissible singular points of the second kind, and that the set of singular points $\Sigma$ is transversal to the boundary $\partial M$. Then, the following two equations hold:
\begin{itemize}
\item[$(1)$]$\displaystyle{\int_M(K_N\circ f)dA+2\int_{\Sigma}\kappa_sds+\int_{\partial{M}}\kappa_gds=2\pi\chi(M)+\sum_{p\in(\Sigma\cap\partial{M})^{\mathrm{null}}}\left(2\alpha^+(p)-\pi\right)}$,
\item[$(2)$]$\displaystyle{\int_{M}(K_N\circ f)d\widehat{A}+\int_{\partial M\cap M^+}\kappa_gds-\int_{\partial M\cap M^-}\kappa_gds}$

$\displaystyle{=2\pi\left(\chi(M^+)-\chi(M^-)\right)+2\pi\left(\#S^+-\#S^-\right)+\pi\left(\#(\Sigma\cap\partial{M})^+-\#(\Sigma\cap\partial{M})^-\right)}$,
\end{itemize}
where $\kappa_gds$ is a geodesic curvature measure, $S^+$ (resp. $S^-$) denotes the set of the positive (resp. negative) singular points of the second kind in $M\backslash\partial M$, and $(\Sigma\cap\partial{M})^+$ (resp. $(\Sigma\cap\partial{M})^{\mathrm{null}}$, $(\Sigma\cap\partial{M})^-$) denotes the set of the positive (resp. null, negative) singular points on $\partial M$. Other notations are explained in $\S\ref{sec2}$.}
\end{theorem}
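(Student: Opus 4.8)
The plan is to reduce the two equations to the corresponding Gauss-Bonnet type formulas for coherent tangent bundles over surfaces with boundary, established by Domitrz and Zwierzy\'{n}ski \cite{2} and recalled in \S\ref{sec2}. The first task is therefore to exhibit the coherent tangent bundle associated to $f$. Following Example \ref{ex2.10}, I would set $\mathcal{E}:=f^*TN$, equip it with the pulled-back metric $f^*g$ and the pulled-back Levi-Civita connection $D:=f^*\nabla^g$, and take as bundle homomorphism the differential $\varphi:=df\colon TM\to\mathcal{E}$. The metric compatibility and torsion-freeness of $\nabla^g$ guarantee that the coherence condition $D_X\varphi(Y)-D_Y\varphi(X)=\varphi([X,Y])$ holds, so that $(\mathcal{E},f^*g,D,\varphi)$ is a coherent tangent bundle over $M$.

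The second task is to translate the data of $f$ into the intrinsic data of this coherent tangent bundle. Since $\varphi=df$, the singular set of $f$ is exactly the locus where $\varphi$ fails to be injective, so it coincides with the singular set $\Sigma$ of the coherent tangent bundle, and the null directions agree. I would then check that a singular point of the first kind of $f$ induces a singular point of the first kind of the coherent tangent bundle, and that an admissible singular point of the second kind of $f$ induces an admissible singular point of the second kind of the bundle; this is precisely what makes the hypotheses of the formulas of \cite{2} applicable. It also remains to verify that the pulled-back Gaussian curvature $K_N\circ f$ equals the intrinsic curvature of $(\mathcal{E},D)$, which holds because curvature is preserved under pullback of the connection, and that the singular curvature $\kappa_s$, the geodesic curvature measure $\kappa_g\,ds$, the signed area form $d\widehat{A}$, the sign conventions defining $M^{\pm}$, $S^{\pm}$, $(\Sigma\cap\partial M)^{\pm}$, $(\Sigma\cap\partial M)^{\mathrm{null}}$, and the angle $\alpha^+(p)$ all agree with the corresponding quantities defined intrinsically for the coherent tangent bundle. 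Each of these is a matter of unwinding the definitions of \S\ref{sec2}.

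With these identifications in hand, both equations follow by direct substitution: formula $(1)$ comes from the first (unsigned) Gauss-Bonnet type formula for coherent tangent bundles, and formula $(2)$ from the second (signed-area) one. The transversality of $\Sigma$ to $\partial M$ is exactly the hypothesis under which those formulas are stated; it guarantees that the boundary singular points are isolated and that the correction term $\sum_{p\in(\Sigma\cap\partial M)^{\mathrm{null}}}\left(2\alpha^+(p)-\pi\right)$ is well defined.

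The step I expect to be the main obstacle is the matching of local normal forms in the second task. Concretely, I would need to verify that the fold-type condition defining a singular point of the first kind of $f$ is equivalent to the transversality of the null direction of $\varphi$ to $\Sigma$, and, more delicately, that an admissible singular point of the second kind (the cusp case) contributes correctly to the counts $\#S^{\pm}$ and to the boundary terms without violating the regularity hypotheses of \cite{2}. The analysis near a null singular point on $\partial M$, where $\Sigma$ meets the boundary, requires the transversality assumption together with a careful local computation to pin down the value of $\alpha^+(p)$.
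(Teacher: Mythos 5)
Your proposal is correct and takes essentially the same route as the paper: construct the induced coherent tangent bundle $(M,f^*TN,f^*g,D,df)$ as in Example \ref{ex2.10}, apply the two Gauss--Bonnet type formulas of Domitrz--Zwierzy\'{n}ski \cite{2} for coherent tangent bundles over surfaces with boundary, and identify the bundle curvature with $K_N\circ f$ (this is exactly the paper's Proposition \ref{prop2.13}). The ``main obstacle'' you anticipate is in fact immediate in the paper's framework, since the notions of singular point of the first/second kind, $\kappa_s$, $M^\pm$, $S^\pm$ and the boundary counts in \S\ref{sec2} are defined directly in terms of $f$ and coincide with the intrinsic ones of the bundle because $\varphi=df$.
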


Saji, Umehara, and Yamada derived the Levine formula related to the rotation indices and the Quine formula related to the mapping degree by applying the Gauss-Bonnet type formulas for surfaces without boundary (see \cite{5,10,8}). Domitrz and Zwierzy\'{n}ski showed the ``special case'' of the Fukuda-Ishikawa formula, which is a generalization of the Quine formula to surfaces with boundary (see \cite{2,3}). (The ``special case'' shown by Domitrz and Zwierzy\'{n}ski is the case where the singular points on the boundary of a surface are null (see Definition $\ref{def2.17}$).)

Based on these previous works, we give a proof of Theorem $\ref{thm2.18}$, and as an application of this theorem, we give a generalization of the Levine formula to the case of surfaces with boundary and a generalization of the Quine-Fukuda-Ishikawa formula (see Theorem $\ref{thm3.1}$, $\ref{thm4.1}$).

The paper consists of the following: In \S$\ref{sec2}$, we briefly review the theory of non-degenerate singular points of mappings between surfaces and the theory of coherent tangent bundles induced by mappings between surfaces, and give a proof of Theorem $\ref{thm2.18}$. In \S$\ref{sec3}$, we give a generalization of the Levine formula to the case of surfaces with boundary by using Theorem $\ref{thm2.18}$ $(1)$. In \S$\ref{sec4}$, we give a generalization of the Quine-Fukuda-Ishikawa formula by using Theorem $\ref{thm2.18}$ $(2)$.

\section{Mappings between surfaces and Gauss-Bonnet type formulas}\label{sec2}

Let $M,N$ be oriented surfaces (possibly with boundary) and $f:M\to N$ a $C^\infty$-map.

\begin{definition}
\label{def2.1}

A point $p\in M$ is a \textit{singular point} of $f$ if the rank of the derivative $df_p:T_pM\to T_{f(p)}N$ of $f$ at $p$ is less than $2$. The set of the singular points of $f$ is denoted by $\Sigma$. A point which is not a singular point is called \textit{regular point}.
\end{definition}

We take a local coordinate system $(V;x,y)$ compatible with the orientation of $N$. We take a positive orthonormal frame field $\left\{\bm{e}_1,\bm{e}_2\right\}$ on the restriction $TN|_V$ of the tangent bundle $TN$ to $V$ and let $\left\{\omega_1,\omega_2\right\}$ be its dual basis field.

\begin{definition}
\label{def2.2}

We define a $2$-form $dA_N$ on $V$ as
\begin{equation*}
dA_N:=\omega_1\wedge\omega_2.\label{2.1}
\end{equation*}
Then, we can check that $dA_N$ does not depend on the choice of the positive orthonormal frame fields on $TN|_V$, so $dA_N$ defines a globally defined $2$-form on $N$. We call $dA_N$ the \textit{area form} on $N$, and the pull-back $d\widehat{A}$ of $dA_N$ by $f$ is called the \textit{signed area form} on $M$:
\begin{equation}
d\widehat{A}:=f^*(dA_N).\label{2.2}
\end{equation}
\end{definition}

We take a local coordinate system $(U;u,v)$ compatible with the orientation of $M$ satisfying $f(U)\subset V$.

\begin{definition}
\label{def2.3}

We define a $C^\infty$-function $\lambda$ on $U$ as
\begin{eqnarray}
\lambda&:=&d\widehat{A}\left(\frac{\partial}{\partial u},\frac{\partial}{\partial v}\right)=(dA_N)_f\left(f_u,f_v\right),\label{2.3}
\end{eqnarray}
where $f_u:=df\left(\frac{\partial}{\partial u}\right),\ f_v:=df\left(\frac{\partial}{\partial v}\right)$. We call $\lambda$ the \textit{signed area density function}.
\end{definition}

By $(\ref{2.3})$, $d\widehat{A}$ is expressed on $U$ as
\begin{equation*}
d\widehat{A}=\lambda du\wedge dv.\label{2.4}
\end{equation*}
Hence, $d\widehat{A}$ is a smooth $2$-form on $M$. Furthermore, the set of the singular points $\Sigma$ on $U$ is represented as
\begin{equation*}
\Sigma\cap U=\left\{p\in U\mid\lambda(p)=0\right\}.\label{2.5}
\end{equation*}

\begin{definition}
\label{def2.4}

We define a continuous $2$-form $dA$ on $U$ as
\begin{equation*}
dA:=|\lambda|du\wedge dv.\label{2.6}
\end{equation*}
Then, we can check that $dA$ does not depend on the choice of the local coordinate systems compatible with the orientation of $M$, so $dA$ defines a globally defined continuous $2$-form on $M$. We call $dA$ the \textit{area form} on $M$.
\end{definition}

Using the $2$-forms $d\widehat{A}$ and $dA$ on $M$, we define sub-surfaces $M^+$ and $M^-$ of $M$ as
\begin{eqnarray*}
M^+:=\left\{p\in M\backslash\Sigma\mid dA_p=d\widehat{A}_p\right\},\ M^-:=\left\{p\in M\backslash\Sigma\mid dA_p=-d\widehat{A}_p\right\},\label{2.7}
\end{eqnarray*}
respectively. We note that $M^+$ and $M^-$ are represented on $U$ as
\begin{equation}
M^+\cap U=\left\{p\in U\mid \lambda(p)>0\right\},\ M^-\cap U=\left\{p\in U\mid\lambda(p)<0\right\},\label{2.8}
\end{equation}
respectively. In particular, by $(\ref{2.8})$, we see that $M^+$ (resp. $M^-$) is the set of the regular points such that the derivative $df$ of $f$ is orientation preserving (resp. orientation inverting).

\begin{definition}
\label{def2.5}

A singular point $p\in M$ is \textit{non-degenerate} if the exterior derivative $d\lambda$ of the signed area density function $\lambda$ does not vanish at $p$.
\end{definition}

\begin{definition}
\label{def2.6}

Let $p\in M$ be a non-degenerate singular point. Then, by the implicit function theorem, there exists a sufficiently small neighborhood $U$ of $p$ such that the set of the singular points $\Sigma$ on $U$ can be parameterized by a regular curve $\gamma(t)\ (\gamma(0)=p)$. Such a curve is called a \textit{singular curve}, and the direction of the tangent vector $\gamma^\prime(t):=\frac{d\gamma}{dt}(t)$ is called the \textit{singular direction}.

On the other hand, we can check that the rank of the derivative $df_p:T_pM\to T_{f(p)}N$ of $f$ at $p$ is $1$. Hence, the dimension of the sub-linear space $\ker{df_p}$ of $T_pM$ is $1$, which is called the \textit{null direction}. Also, since $\gamma(t)$ consists of non-degenerate singular points, the null direction at $\gamma(t)$ is a $1$-dimensional linear space. Therefore, we can take a smooth vector field $\eta(t)$ along $\gamma(t)$ such that $\eta(t)$ points in the null direction at $\gamma(t)$, which is called a \textit{null vector field}.
\end{definition}

\begin{definition}
\label{def2.7}

Let $p\in M$ be a non-degenerate singular point, $\gamma(t)\ (\gamma(0)=p)$ a singular curve through $p$, and $\eta(t)$ a null vector field along $\gamma(t)$. $p$ is \textit{of the first} (resp. \textit{second}) \textit{kind} if the singular direction and the null direction at $p$ are different (resp. the same), i.e.,
\begin{equation*}
\det\left(\gamma^\prime(0),\eta(0)\right)\neq0\ \ \ \left(\mbox{resp}.\ \det\left(\gamma^\prime(0),\eta(0)\right)=0\right),\label{2.9}
\end{equation*}
where $\det$ denotes the determinant function of $2\times2$-matrices obtained by identifying $T_pM$ with $\mathbb{R}^2$. Moreover, a singular point of the second kind $p$ is \textit{admissible} if there exists a neighborhood $V$ of $p$ such that the set of the singular points on $V\backslash\{p\}$ consists only of singular points of the first kind, that is, there exists a non-negative integer $k$ such that
\begin{equation*}
\delta(0)=0,\ \delta^\prime(0)=0,\ \cdots,\ \delta^{(k)}(0)=0,\ \delta^{(k+1)}(0)\neq0,\label{2.10}
\end{equation*}
where $\delta(t):=\det\left(\gamma^\prime(t),\eta(t)\right)$.
\end{definition}

\begin{example}
\label{ex2.8}

A singular point $p$ of a $C^\infty$-map $f:M\to N$ between surfaces $M,N$ is a \textit{fold} (resp. \textit{cusp}) \textit{singular point} if the germ of $f$ at $p$ is right-left equivalent to the germ of a $C^\infty$-map defined by
\begin{equation*}
(u,v)\mapsto\left(u,v^2\right)\ \ \ \left(\mbox{resp}.\ (u,v)\mapsto\left(u^3-3uv,v\right)\right)\label{2.11}
\end{equation*}
at the origin $(0,0)$. Here, two map-germs
\begin{equation*}
g:(M,p)\to(N,g(p)),\ h:(M,q)\to(N,h(q))\label{2.12}
\end{equation*}
are \textit{right-left equivalent} if there exist two diffeomorphism-germs
\begin{equation*}
\varphi:(M,p)\to(M,q),\ \psi:(N,g(p))\to(N,h(q))\label{2.13}
\end{equation*}
such that
\begin{equation*}
\psi\circ g=h\circ\varphi\label{2.14}
\end{equation*}
holds. Then, we can check that $p$ is a singular point of the first kind (resp. an admissible singular point of the second kind).
\end{example}

In order to describe the Gauss-Bonnet type formulas for mappings between surfaces, we define a coherent tangent bundle over $M$.

\begin{definition}
\label{def2.9}

A \textit{coherent tangent bundle} over $M$ is a $5$-tuple $(M,\mathcal{E},\langle\cdot,\cdot\rangle,D,\varphi)$ such that it satisfies the following conditions:
\begin{itemize}
\item[$(1)$]$\mathcal{E}$ is an orientable vector bundle of rank $2$ on $M$, $\langle\cdot,\cdot\rangle$ is a metric on $\mathcal{E}$, and $D$ is a metric connection on $(\mathcal{E},\langle\cdot,\cdot\rangle)$,
\item[$(2)$]$\varphi:TM\to\mathcal{E}$ is a bundle homomorphism such that, for any vector fields $X,Y$ on $M$,
\begin{equation*}
D_X\varphi(Y)-D_Y\varphi(X)=\varphi([X,Y])\label{2.15}
\end{equation*}
holds.
\end{itemize}
\end{definition}

\begin{example}[\mbox{\cite[Example $2.3$]{8}}]
\label{ex2.10}

Let $(N,g)$ be an orientable Riemannian surface (possibly with boundary) and $f^*TN$ the pull-back of the tangent bundle $TN$ by a $C^\infty$-map $f:M\to N$. Then, the pull-back $\langle\cdot,\cdot\rangle:=f^*g$ of the metric $g$ by $f$ is a metric on $f^*TN$ and the restriction $D$ of the Levi-Civita connection on $N$ to $f^*TN$ is a metric connection on $(f^*TN,\langle\cdot,\cdot\rangle)$. Moreover, the derivative $df:TM\to f^*TN$ of $f$ is a bundle homomorphism such that, for any vector fields $X,Y$ on $M$,
\begin{equation*}
D_Xdf(Y)-D_Ydf(X)=df([X,Y])\label{2.16}
\end{equation*}
holds. Hence, the $5$-tuple $(M,f^*TN,\langle\cdot,\cdot\rangle,D,df)$ is a coherent tangent bundle over $M$, which is called the \textit{coherent tangent bundle over $M$ induced by $f$}.
\end{example}

We equip $N$ with a Riemannian metric. We fix a coherent tangent bundle $(M,f^*TN,\langle\cdot,\cdot\rangle,D,df)$ induced by $f$.

\begin{definition}
\label{def2.11}

We define the \textit{first fundamental form} $ds^2$ on $M$ as the pull-back of the metric $\langle\cdot,\cdot\rangle$ by $f$:
\begin{equation*}
ds^2:=f^*\langle\cdot,\cdot\rangle.\label{2.17}
\end{equation*}
A point $p\in M$ is a \textit{singular point} of $ds^2$ if $ds^2$ is degenerate at $p$. We note that singular points of $ds^2$ are singular points of $f$.
\end{definition}

\begin{definition}
\label{def2.12}

By the definition of the connection $D$ on $f^*TN$, for any vector field $X$ on $f^{-1}(V)$,
\begin{equation*}
\langle D_X(\bm{e}_i\circ f),\bm{e}_i\rangle=g\left(\nabla_{df(X)}\bm{e}_i,\bm{e}_i\right)=0\ \ \ (i=1,2)\label{2.18}
\end{equation*}
holds, where $\nabla$ is the Levi-Civita connection on $N$. Hence, there exists a unique $1$-form $\mu$ on $f^{-1}(V)$ such that, for any vector field $X$ on $f^{-1}(V)$, it satisfies
\begin{equation}
D_X(\bm{e}_1\circ f)=-\mu(X)\bm{e}_2,\ D_X(\bm{e}_2\circ f)=\mu(X)\bm{e}_1,\label{2.19}
\end{equation}
which is called the \textit{connection form} with respect to $D$.
\end{definition}

Since we can check that the exterior derivative $d\mu$ of the connection form $\mu$ does not depend on the choice of the positive orthonormal frame fields on $TN$, so $d\mu$ defines a globally defined $2$-form on $M$. Furthermore, we can check that
\begin{equation*}
d\mu=Kd\widehat{A}\label{2.20}
\end{equation*}
holds on the set of the regular points $M\backslash\Sigma$, where $K$ is the Gaussian curvature defined on $M\backslash\Sigma$. Hence, by the continuity of $d\mu$, the $2$-form $Kd\widehat{A}$ on $M\backslash\Sigma$ can be smoothly extended to the $2$-form $d\mu$ on $M$.

\begin{proposition}
\label{prop2.13}

{\it Let $K_N$ be the Gaussian curvature of $(N,g)$. Then, 
\begin{equation*}
Kd\widehat{A}=(K_N\circ f)d\widehat{A}\label{2.21}
\end{equation*}
holds on the set of the regular points $M\backslash\Sigma$.}
\end{proposition}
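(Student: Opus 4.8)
The plan is to reduce the identity to the two-dimensional Cartan structure equation on the target $N$, by recognizing the connection form $\mu$ as a pull-back. First I would note that, because the pull-back connection $D$ on $f^*TN$ satisfies $D_X(\bm{e}_i\circ f)=\nabla_{df(X)}\bm{e}_i$, the form $\mu$ defined by $(\ref{2.19})$ is exactly the pull-back $f^*\mu_N$ of the connection form $\mu_N$ of the Levi--Civita connection $\nabla$ on $V$; here $\mu_N$ is the $1$-form on $V$ determined by $\nabla_Y\bm{e}_1=-\mu_N(Y)\bm{e}_2$ and $\nabla_Y\bm{e}_2=\mu_N(Y)\bm{e}_1$. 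Indeed, substituting $Y=df(X)$ and comparing with $(\ref{2.19})$ gives $\mu(X)=\mu_N(df(X))$, i.e. $\mu=f^*\mu_N$ on $f^{-1}(V)$.

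Next I would establish the structure equation $d\mu_N=K_N\,dA_N$ on $V$. This is a direct computation: expanding the curvature $R$ of $\nabla$ in the orthonormal frame $\{\bm{e}_1,\bm{e}_2\}$ using the above defining relations yields $R(\bm{e}_1,\bm{e}_2)\bm{e}_2=d\mu_N(\bm{e}_1,\bm{e}_2)\,\bm{e}_1$, so that the Gaussian curvature $K_N=g(R(\bm{e}_1,\bm{e}_2)\bm{e}_2,\bm{e}_1)$ equals $d\mu_N(\bm{e}_1,\bm{e}_2)$; since $dA_N(\bm{e}_1,\bm{e}_2)=1$, this is precisely $d\mu_N=K_N\,dA_N$.

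Pulling this back by $f$, using $d\widehat{A}=f^*(dA_N)$ from $(\ref{2.2})$ and $\mu=f^*\mu_N$, I obtain on all of $f^{-1}(V)$
\begin{equation*}
d\mu=f^*(d\mu_N)=f^*(K_N\,dA_N)=(K_N\circ f)\,f^*(dA_N)=(K_N\circ f)\,d\widehat{A};
\end{equation*}
since such neighborhoods cover $M$, the identity $d\mu=(K_N\circ f)\,d\widehat{A}$ holds globally. On the regular set $M\backslash\Sigma$ the first fundamental form $ds^2=f^*g$ is non-degenerate and the relation $d\mu=K\,d\widehat{A}$ (recorded in the paragraph preceding the statement, with $K$ the Gaussian curvature of $ds^2$) is available; comparing the two expressions for $d\mu$ gives $K\,d\widehat{A}=(K_N\circ f)\,d\widehat{A}$ on $M\backslash\Sigma$, which is the assertion. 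Conceptually this is just Gauss's Theorema Egregium: on $M\backslash\Sigma$ the map $f$ is a local isometry from $(M,ds^2)$ to $(N,g)$ and hence preserves Gaussian curvature.

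The step requiring the most care is the sign bookkeeping in $d\mu_N=K_N\,dA_N$, since the sign convention for $\mu_N$ fixed by $(\ref{2.19})$ must be tracked together with the conventions for $R$ and for $K_N$; a single misplaced sign would flip the conclusion. A secondary point worth verifying is that no orientation issue arises on $M^-$, where $df$ reverses orientation: this is automatic, because all orientation data is carried by the signed area form $d\widehat{A}$ appearing on both sides, so the identity holds verbatim on $M^+$ and $M^-$.
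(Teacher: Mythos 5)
Your proposal is correct and follows essentially the same route as the paper's own proof: identify $\mu=f^*\mu_N$ via the defining relations $(\ref{2.19})$ and uniqueness of the connection form, invoke the structure equation $d\mu_N=K_N\,dA_N$ on $N$, and pull everything back by $f$ using $d\widehat{A}=f^*(dA_N)$. The only differences are cosmetic — you additionally verify the structure equation and add the Theorema Egregium interpretation, while the paper simply cites both facts.
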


\begin{proof}

Let $\mu_N$ be the connection form with respect to the Levi-Civita connection on $N$. Then, for any vector field $X$ on $M\backslash\Sigma$,
\begin{eqnarray*}
D_X(\bm{e}_1\circ f)&=&-\mu_N(df(X))\bm{e}_2=-f^*\mu_N(X)\bm{e}_2,\label{2.22}\\
D_X(\bm{e}_2\circ f)&=&\mu_N(df(X))\bm{e}_1=f^*\mu_N(X)\bm{e}_1\label{2.23}
\end{eqnarray*}
holds. By the uniqueness of the connection form with respect to the connection $D$ on $f^*TN$, we have
\begin{equation*}
\mu=f^*\mu_N,\label{2.24}
\end{equation*}
where $\mu$ is the connection form with respect to $D$ (see $(\ref{2.19})$). Hence, by noting that the exterior derivative $d\mu_N$ of $\mu_N$ is equal to $K_NdA_N$, we have
\begin{equation*}
Kd\widehat{A}=d\mu=d(f^*\mu_N)=f^*(d\mu_N)=f^*(K_NdA_N)=(K_N\circ f)d\widehat{A}\label{2.25}
\end{equation*}
on $M\backslash\Sigma$.\qed
\end{proof}

Let $p\in M$ be a singular point of the first kind. Then, there exists a singular curve $\gamma(t)\ (\gamma(0)=p)$ passing through $p$. If necessary, we can take a sufficiently small neighborhood $U$ of $p$ so that $U\cap\mathrm{Im}\gamma$ consists only of singular points of the first kind, so we may assume that $\gamma(t)$ consists only of singular points of the first kind. We take a null vector field $\eta(t)$ along $\gamma(t)$ so that $\left\{\gamma^\prime(t),\eta(t)\right\}$ is compatible with the orientation of $M$. Since $\gamma(t)$ is a non-degenerate singular point, the rank of the derivative $df_{\gamma(t)}$ of $f$ at $\gamma(t)$ is $1$. Hence, $f^\prime(\gamma(t))$ never vanishes. On the other hand, since $\lambda(\gamma(t))=0$, we have $d\lambda_{\gamma(t)}(\gamma^\prime(t))=0$. Also, since $\gamma(t)$ is a non-degenerate singular point, the dimension of $\ker d\lambda_{\gamma(t)}$ is $1$. Hence, since $\gamma^\prime(t)$ is linearly independent of $\eta(t)$, $d\lambda_{\gamma(t)}(\eta(t))$ never vanishes.

\begin{definition}
\label{def2.14}

We define the \textit{singular curvature} $\kappa_s(t)$ of a singular curve $\gamma(t)$ consisting of singular points of the first kind as
\begin{eqnarray*}
\kappa_s(t):=\mathrm{sgn}(d\lambda_{\gamma(t)}(\eta(t)))\frac{\langle f^{\prime\prime}(\gamma(t)),\bm{n}(t)\rangle}{|f^\prime(\gamma(t))|^2},\label{2.26}
\end{eqnarray*}
where $\bm{n}(t)$ is a smooth section along $\gamma(t)$ of $f^*TN$ such that $\left\{\frac{f^\prime(\gamma(t))}{|f^\prime(\gamma(t))|},\bm{n}(t)\right\}$ gives a positive orthonormal frame field on $f^*TN$, and $f^{\prime\prime}(\gamma(t))$ is the covariant derivative $D_tf^\prime(\gamma(t))$ of $f^\prime(\gamma(t))$ with respect to $\gamma^\prime(t)$.
\end{definition}

\begin{proposition}[\mbox{\cite[Proposition $1.7$, $2.11$]{6}}]
\label{prop2.15}

{\it The following assertions hold:
\begin{itemize}
\item[$(1)$]The value of the singular curvature $\kappa_s$ is independent of the choice of the parameters of the singular curve $\gamma$, the orientation of $\gamma$ and the orientation of $M,N$,
\item[$(2)$]Let $\gamma(t)$ be a singular curve passing through an admissible singular point of the second kind. Then, the singular curvature measure $\kappa_s(t)ds$ defines a bounded $1$-form along $\gamma(t)$, where $ds:=|\gamma^\prime(t)|dt$ is the arc-length measure of $\gamma(t)$. Hence, the integral of the singular curvature measure is well-defined.
\end{itemize}}
\end{proposition}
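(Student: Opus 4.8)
The plan is to prove the two assertions separately: assertion $(1)$ is a systematic bookkeeping of how each ingredient of $\kappa_s$ transforms under the allowed changes of data, whereas assertion $(2)$ is an order-of-vanishing estimate at the second-kind point. For $(1)$, I would first note that, apart from the sign factor, $\frac{\langle f''(\gamma(t)),\bm{n}(t)\rangle}{|f'(\gamma(t))|^2}$ is the signed geodesic curvature of the regular image curve $f\circ\gamma$ in $(N,g)$, with $\bm{n}$ the unit normal fixed by the orientation of $N$, and then verify invariance under each admissible change. Under an orientation-preserving reparametrization $t\mapsto\tilde t$, the covariant chain rule gives $\tilde f'=(d\tilde t/dt)f'$ and $\tilde f''=(d\tilde t/dt)^2 f''+(d^2\tilde t/dt^2)f'$; since $\langle f',\bm{n}\rangle=0$ the last term is annihilated by $\bm{n}$ and the ratio is unchanged, while $\bm{n}$ and $\mathrm{sgn}(d\lambda(\eta))$ are untouched. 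Reversing the orientation of $\gamma$ flips $f'/|f'|$, hence $\bm{n}\mapsto-\bm{n}$ and the curvature factor changes sign; but the prescription that $\{\gamma',\eta\}$ be positively oriented then forces $\eta\mapsto-\eta$, flipping $\mathrm{sgn}(d\lambda(\eta))$, so the two signs cancel. Reversing the orientation of $M$ sends $\eta\mapsto-\eta$ and $\lambda\mapsto-\lambda$ (as $\lambda=d\widehat{A}(\partial_u,\partial_v)$ is computed in a coordinate compatible with that orientation), so $d\lambda(\eta)$ is unchanged while $\bm{n}$ depends only on $N$; hence $\kappa_s$ is unchanged. Finally, reversing the orientation of $N$ flips $\bm{n}$ and, through $dA_N\mapsto-dA_N$, also flips $\lambda$ and hence $\mathrm{sgn}(d\lambda(\eta))$, so again the signs cancel.

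For $(2)$, the key observation is that at the second-kind point $p=\gamma(0)$ the singular and null directions coincide, so $\gamma'(0)\in\ker df_p$ and thus $f'(\gamma(0))=df(\gamma'(0))=0$; consequently $\kappa_s$ itself blows up and only the measure can be expected to remain bounded. I would first locate the order of vanishing. Choose a smooth frame $\{X,\eta\}$ of $TM$ along $\gamma$ near $p$ with $\det(X,\eta)\neq0$ and write $\gamma'(t)=c_1(t)X(t)+c_2(t)\eta(t)$. Since $df(\eta)=0$ and $df(X)\neq0$, we obtain $f'(\gamma(t))=c_1(t)\,b(t)$ with $b:=df(X)$, $b(0)\neq0$, and $\delta(t)=\det(\gamma',\eta)=c_1(t)\det(X,\eta)$; admissibility (namely $\delta$ vanishing to order $k+1$) together with the Hadamard lemma gives $c_1(t)=t^{k+1}\tilde c(t)$ with $\tilde c(0)\neq0$. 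Using $D_t f'=c_1' b+c_1 D_t b$ and $\bm{n}(t)\perp b(t)$ (which holds for $t\neq0$ because $\bm{n}\perp f'$ and $c_1\neq0$ there, and extends by continuity), the pairing becomes $\langle f'',\bm{n}\rangle=c_1\langle D_t b,\bm{n}\rangle$, so exactly one factor $c_1$ survives. Recalling that the first fundamental form $ds^2=f^*\langle\cdot,\cdot\rangle$ makes $ds=|\gamma'(t)|dt=|f'(\gamma(t))|dt=|c_1(t)|\,|b(t)|\,dt$, I would conclude
\[
\kappa_s(t)\,ds=\mathrm{sgn}(d\lambda_{\gamma(t)}(\eta(t)))\,\mathrm{sgn}(c_1(t))\,\frac{\langle D_t b(t),\bm{n}(t)\rangle}{|b(t)|}\,dt,
\]
whose right-hand side is bounded near $t=0$ since $b(0)\neq0$ and all remaining quantities are continuous (the factor $\mathrm{sgn}(c_1(t))$ only contributes a jump when $k+1$ is odd). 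As the integrand is bounded on the compact singular curve, the integral of $\kappa_s\,ds$ is well-defined.

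The main obstacle is the order-of-vanishing step in $(2)$: one must transfer the admissibility condition, phrased through $\delta(t)=\det(\gamma'(t),\eta(t))$, into vanishing information about the section $f'(\gamma(t))$ of $f^*TN$, and then show that the single surviving factor $c_1(t)$ in $\langle f'',\bm{n}\rangle$ cancels against the $|f'|=|c_1|\,|b|$ coming from $ds$. This cancellation is exact only because the leading term $c_1'\,b$ of $f''$ is killed by $\bm{n}$, so the delicate point is verifying that $\bm{n}$ extends continuously (or with a controlled one-sided jump when $k+1$ is odd) to $t=0$, keeping $\langle D_t b,\bm{n}\rangle$ bounded there.
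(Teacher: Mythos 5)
The paper does not actually prove this proposition: it is quoted, with citation, from Saji--Umehara--Yamada \cite[Propositions 1.7, 2.11]{6}, so there is no internal proof to compare yours against. Judged on its own merits, your argument is correct, and it reconstructs what is essentially the standard argument of the cited source. In part $(1)$, the four invariance checks are each right; in particular, the two sign flips under reversal of the orientation of $\gamma$ (one through $\bm{n}$, one through $\eta$ via the requirement that $\{\gamma',\eta\}$ be positively oriented) and under reversal of the orientation of $N$ (one through $\bm{n}$, one through $\lambda$ and hence $\mathrm{sgn}(d\lambda(\eta))$) cancel exactly as you say, while reversal of the orientation of $M$ flips both $\lambda$ and $\eta$ so that $d\lambda(\eta)$ is unchanged. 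In part $(2)$, the chain of identities $f'(\gamma(t))=c_1(t)b(t)$ with $b=df(X)$ nonvanishing, $\delta=c_1\det(X,\eta)$, and $\langle f'',\bm{n}\rangle=c_1\langle D_tb,\bm{n}\rangle$ yields $|\kappa_s\,ds/dt|=|\langle D_tb,\bm{n}\rangle|/|b|\le|D_tb|/|b|$, which is bounded near $t=0$; that is the whole point.

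Two remarks that would streamline your write-up. First, the ``delicate point'' you flag at the end --- whether $\bm{n}$ extends continuously to $t=0$ --- is immaterial for the statement being proved: since $\bm{n}$ is a unit vector, $|\langle D_tb,\bm{n}\rangle|\le|D_tb|$ holds pointwise regardless of any limiting behavior, so boundedness (which is all the proposition asserts) follows the moment the factor $c_1$ cancels. (If you want more, note that $\mathrm{sgn}(c_1(t))\,\bm{n}(t)$ equals the positive $90^\circ$-rotation of $b(t)/|b(t)|$ in $f^*TN$, which is smooth; the only possible discontinuity of the density is the sign jump you identify when $k+1$ is odd.) Second, the Hadamard-lemma step extracting the exact order of vanishing $c_1(t)=t^{k+1}\tilde c(t)$ is not needed: since $c_1$ cancels identically in the ratio, all you use is that $c_1(t)\neq0$ for $t\neq0$, i.e.\ the qualitative form of admissibility guaranteeing that nearby singular points are of the first kind.
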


Let $M$ be a compact oriented surface with boundary, $(N,g)$ an oriented Riemannian surface (possibly with boundary), and $(M,f^*TN,\langle\cdot,\cdot\rangle,D,df)$ a coherent tangent bundle induced by a $C^\infty$-map $f:M\to N$. We suppose that $f$ admits only singular points of the first kind and admissible singular points of the second kind, and that the set of the singular points $\Sigma$ is transversal to the boundary $\partial M$. We triangulate $M$ so that the singular points of the second kind in the interior $M\backslash\partial M$ and the singular points on $\partial M$ are vertices.

\begin{proposition}[\mbox{\cite[Theorem $\mathrm{A}$]{6}, \cite[Theorem $2.13$]{2}}]
\label{prop2.16}

{\it The following assertions hold.
\begin{itemize}
\item[$(1)$]Let $p\in M\backslash\partial M$ be a singular point of the second kind. Then, the interior angle $\alpha^+(p)$ (resp. $\alpha^-(p)$) on the $M^+$ side (resp. $M^-$ side) at $p$ satisfies
\begin{equation*}
\alpha^+(p)+\alpha^-(p)=2\pi,\ \alpha^+(p)-\alpha^-(p)\in\left\{-2\pi,0,2\pi\right\},\label{2.27}
\end{equation*}
\item[$(2)$]Let $p\in\partial M$ be a singular point. If the null direction at $p$ is different from the direction of $\partial M$, then $\alpha^+(p)$ and $\alpha^-(p)$ satisfy
\begin{equation*}
\alpha^+(p)+\alpha^-(p)=\pi,\ \alpha^+(p)-\alpha^-(p)\in\left\{-\pi,\pi\right\}.\label{2.28}
\end{equation*}
On the other hand, if the null direction at $p$ is the same as the direction of $\partial M$, then $\alpha^+(p)$ and $\alpha^-(p)$ satisfy
\begin{equation*}
\alpha^+(p)-\alpha^-(p)=0.\label{2.29}
\end{equation*}
\end{itemize}}
\end{proposition}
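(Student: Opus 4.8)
The plan is to localize the problem at the singular point $p$ and to compute the interior angles $\alpha^\pm(p)$ as the angles, measured in the target metric $g$ (equivalently in the first fundamental form $ds^2=f^*\langle\cdot,\cdot\rangle$, which restricts to a genuine Riemannian metric on the open regions $M^+$ and $M^-$), of the immersed image curves bounding $f(\overline{M^+})$ and $f(\overline{M^-})$ at $f(p)$. Since $\Sigma=\{\lambda=0\}$ with $d\lambda\neq0$ at a non-degenerate singular point, the singular curve $\gamma$ is a regular curve in $M$ that locally separates $M^+=\{\lambda>0\}$ from $M^-=\{\lambda<0\}$; thus $\overline{M^+}$ and $\overline{M^-}$ tile a neighborhood of $p$ and share the boundary $\gamma$. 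First I would reduce everything to understanding the limiting tangent rays at $f(p)$ of the boundary arcs of these two regions.

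For part $(1)$: since $p$ is a singular point of the second kind, $\gamma'(0)$ lies in the null direction $\ker df_p$, so $(f\circ\gamma)'(0)=df_p(\gamma'(0))=0$ and $f(p)$ is a singular point of $f\circ\gamma$. Writing $\gamma'(t)=b(t)w(t)+a(t)\eta(t)$ with $\eta$ a null vector field and $w$ transverse to it, one gets $(f\circ\gamma)'(t)=b(t)\,df(w(t))$ with $df(w(t))\neq0$, so the vanishing order of $(f\circ\gamma)'$ at $0$ equals that of $b$, which equals the vanishing order of $\delta(t)=\det(\gamma'(t),\eta(t))$, namely $k+1$ by admissibility. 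Hence $f(\gamma(t))-f(p)$ has leading term of order $t^{k+2}$, and the two branches $t\to0^+$ and $t\to0^-$ approach $f(p)$ along tangent rays $d_+,d_-$ that are equal when $k$ is even and opposite when $k$ is odd. Because these two branches are the common boundary of both image sectors and $\overline{M^+}\cup\overline{M^-}$ fills a full neighborhood of $p$, the two image angles at $f(p)$ are complementary within one full turn, so they sum to $2\pi$; moreover the dichotomy $d_+=\pm d_-$ forces the pair $\{\alpha^+,\alpha^-\}$ to be either $\{0,2\pi\}$ (when $k$ is even, so one region degenerates to a cusp and the other wraps fully) or $\{\pi,\pi\}$ (when $k$ is odd). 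This yields $\alpha^+(p)+\alpha^-(p)=2\pi$ and $\alpha^+(p)-\alpha^-(p)\in\{-2\pi,0,2\pi\}$.

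For part $(2)$: now $p\in\partial M$, and since $\Sigma$ is transversal to $\partial M$ the singular set near $p$ is a single ray $\gamma_+$ emanating from $p$ into $M$, dividing the half-disk into the sectors $\overline{M^+}$ and $\overline{M^-}$ whose remaining boundaries are the two opposite rays $b_1,b_2$ of $\partial M$ at $p$. As the singular direction is transversal to $\partial M$, the point $p$ is of the first kind, so $f\circ\gamma_+$ is a regular arc with tangent ray $d_\gamma=df_p(\gamma'(0))$ lying on the rank-one image line of $df_p$. If the null direction differs from the boundary direction, then $df_p(b_1)\neq0$, so $f(b_1),f(b_2)$ have opposite tangent rays $\pm df_p(b_1)$, again on that image line; collinearity of the three rays forces one sector to pinch to angle $0$ and the other to span $\pi$, giving $\alpha^+(p)+\alpha^-(p)=\pi$ and $\alpha^+(p)-\alpha^-(p)\in\{-\pi,\pi\}$. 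If instead the null direction coincides with the boundary direction, then $b_1,b_2$ both lie in $\ker df_p$, so $f(b_1),f(b_2)$ are cuspidal arcs whose tangent rays both arise from the second-order term $D^2f(b_1,b_1)$ and hence coincide; the two sectors are then bounded by the same pair of tangent rays and are congruent, whence $\alpha^+(p)=\alpha^-(p)$, i.e. $\alpha^+(p)-\alpha^-(p)=0$.

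The main obstacle is that, because $ds^2$ is degenerate along $\Sigma$, the interior angles are not angles of a smooth metric at $p$ but must be defined by a limiting procedure; the crux is therefore to show that the bounding image curves possess well-defined one-sided tangent rays at $f(p)$ and to determine their mutual position (collinearity on the image line, and the parity-governed dichotomy $d_+=\pm d_-$). This is exactly where non-degeneracy (yielding a regular singular curve) and admissibility (yielding a finite order $k$, hence a genuine leading term for $f\circ\gamma$) are used. A secondary point is to fix orientations carefully so as to assign each computed sector angle to $M^+$ versus $M^-$ with the correct sign; but since only membership in $\{-2\pi,0,2\pi\}$ (resp. $\{-\pi,\pi\}$, $\{0\}$) is asserted, the exact sign need not be resolved. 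These local computations coincide with those underlying \cite[Theorem A]{6} and \cite[Theorem $2.13$]{2}.
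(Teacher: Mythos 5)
The paper does not actually prove Proposition \ref{prop2.16}; it quotes it from \cite[Theorem A]{6} and \cite[Theorem 2.13]{2}, so your argument has to stand on its own, and it has a genuine gap at its central step. The part you do correctly is the computation of the limiting rays: decomposing $\gamma'(t)=b(t)w(t)+a(t)\eta(t)$, noting $(f\circ\gamma)'(t)=b(t)\,df(w(t))$, identifying the vanishing order of $b$ with that of $\delta$ (hence exactly $k+1$ by admissibility), and concluding that the two branches of $f\circ\gamma$ leave $f(p)$ along equal rays for $k$ even and opposite rays for $k$ odd. That is indeed a real ingredient of the cited proofs. The gap is the conversion of this dichotomy into the angle identities: you justify $\alpha^+(p)+\alpha^-(p)=2\pi$ by claiming that $f(\overline{M^+})$ and $f(\overline{M^-})$ are ``complementary within one full turn,'' i.e.\ tile a neighborhood of $f(p)$. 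This is false for maps between surfaces: the images of the two sides of the singular curve overlap. Already at a fold, $f(M^+)$ and $f(M^-)$ coincide as sets; at a cusp (your $k=0$ case), $f(M^+)$ alone covers a punctured neighborhood of $f(p)$ minus the cuspidal edge, covering the inner cuspidal region twice, and it contains $f(M^-)$. Since the sum formula is exactly what you need to exclude $\{\alpha^+,\alpha^-\}=\{0,0\}$ or $\{2\pi,2\pi\}$ in the $k$-even case, the proof collapses at this point. In \cite{6} and \cite{2} the interior angles are not set-theoretic angles of image regions at all: they are defined intrinsically, as total rotations of the normalized vectors $\varphi(\sigma'(t))/|\varphi(\sigma'(t))|$ in the fiber $\mathcal{E}_p$ (so multiplicity of wrapping counts), and the identities $\alpha^++\alpha^-=2\pi$, resp.\ $=\pi$ on the boundary, are established by a continuity/degree argument for that normalized tangent map. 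That argument is the missing idea.

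Part (2) has two further concrete errors. First, ``as the singular direction is transversal to $\partial M$, the point $p$ is of the first kind'' is a non sequitur: being of the first kind compares the \emph{null} direction with the \emph{singular} direction, while the transversality hypothesis compares the singular direction with the \emph{boundary} direction; a singular point on $\partial M$ can perfectly well be of the second kind (then $df_p(\gamma'(0))=0$ and your ``regular arc'' claim fails, though a limiting ray still exists by the order-$k$ argument). Second, in the case where the null direction is tangent to $\partial M$, you deduce $\alpha^+(p)=\alpha^-(p)$ from ``the two sectors are bounded by the same pair of tangent rays and are congruent''; equality of bounding rays does not determine the angle --- your own part (1) exhibits two sectors with identical bounding rays and angles $0$ and $2\pi$ --- so congruence does not follow. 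Both cases are precisely where \cite[Theorem 2.13]{2} does careful work with limiting normalized tangent vectors along $\partial M$ and across the sectors; a correct proof must replace your region-tiling and congruence claims with that analysis.
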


\begin{definition}
\label{def2.17}

A singular point of the second kind $p\in M\backslash\partial M$ is \textit{positive} (resp. \textit{null}, \textit{negative}) if
\begin{eqnarray*}
\alpha^+(p)-\alpha^-(p)=2\pi\ \ \ \left(\mbox{resp}.\ \alpha^+(p)-\alpha^-(p)=0,\ \alpha^+(p)-\alpha^-(p)=-2\pi\right)\label{2.30}
\end{eqnarray*}
holds. Similarly, a singular point $p\in\partial M$ is \textit{positive} (resp. \textit{null}, \textit{negative}) if 
\begin{eqnarray*}
\alpha^+(p)-\alpha^-(p)=\pi\ \ \ \left(\mbox{resp}.\ \alpha^+(p)-\alpha^-(p)=0,\ \alpha^+(p)-\alpha^-(p)=-\pi\right)\label{2.31}
\end{eqnarray*}
holds.
\end{definition}

Finally, we give a proof of Theorem $\ref{thm2.18}$.

\begin{proof}[\textit{Proof of Theorem $\ref{thm2.18}$}.]

By Example $\ref{ex2.10}$, we can construct a coherent tangent bundle $(M,f^*TN,\langle\cdot,\cdot\rangle,D,df)$ induced by $f$. Therefore, by \cite[Theorem $2.20$]{2}, the following two formulas hold:
\begin{eqnarray}
&&\int_MKdA+2\int_{\Sigma}\kappa_sds+\int_{\partial{M}}\kappa_gds\nonumber\\
&=&2\pi\chi(M)+\sum_{p\in(\Sigma\cap\partial{M})^{\mathrm{null}}}\left(2\alpha^+(p)-\pi\right),\label{2.32}\\
&&\int_{M}Kd\widehat{A}+\int_{\partial M\cap M^+}\kappa_gds-\int_{\partial M\cap M^-}\kappa_gds\nonumber\\
&=&2\pi\left(\chi(M^+)-\chi(M^-)\right)+2\pi\left(\#S^+-\#S^-\right)\nonumber\\
&&+\pi\left(\#(\Sigma\cap\partial{M})^+-\#(\Sigma\cap\partial{M})^-\right).\label{2.33}
\end{eqnarray}
Here, by Proposition $\ref{prop2.13}$, we have
\begin{eqnarray}
Kd\widehat{A}&=&(K_N\circ f)d\widehat{A},\label{2.34}\\
KdA&=&(K_N\circ f)dA.\label{2.35}
\end{eqnarray}
Therefore, $(\ref{2.32})$ and $(\ref{2.35})$ yield the formula $(1)$, and $(\ref{2.33})$ and $(\ref{2.34})$ yield the formula $(2)$.\qed
\end{proof}

\begin{remark}
\label{rem2.19}

Theorem $\ref{thm2.18}$ is a generalization of \cite[Proposition $4.2$]{2} in the following sense: The formula given in \cite[Proposition $4.2$]{2} is derived under the assumption that a $C^\infty$-map between surfaces admits only fold singular points and cusp singular points. Fold singular points and cusp singular points are typical examples of singular points of the first kind and admissible singular points of the second kind, respectively (see Example $\ref{ex2.8}$).
\end{remark}

\section{Application of Theorem $\ref{thm2.18}$ $(1)$}\label{sec3}

By applying Theorem $\ref{thm2.18}$ $(1)$, we can show the following assertion.

\begin{theorem}
\label{thm3.1}

{\it Let $M$ be a compact oriented surface with boundary and $f:M\to\mathbb{R}^2$ a $C^\infty$-map from $M$ into the plane $\mathbb{R}^2$. We suppose that $f$ admits only singular points of the first kind and admissible singular points of the second kind, and that the set of the singular points $\Sigma$ does not intersect the boundary $\partial M$. Let $\left\{c_1,\cdots,c_r\right\}$ (resp. $\left\{e_1,\cdots,e_s\right\}$) be the components of $\Sigma$ (resp. $\partial M$). Then, there exists a unique orientation of $\left\{c_1,\cdots,c_r\right\}$ and $\left\{e_1,\cdots,e_s\right\}$ respectively, such that the following equation holds:
\begin{equation}
\frac{\chi(M)}{2}=\sum_{i=1}^rI(c_i)+\frac{1}{2}\sum_{i=1}^sI(e_i),\label{3.1}
\end{equation}
where $I(c_i)$ (resp. $I(e_i)$) is the rotation index of $c_i$ (resp. $e_i$).}
\end{theorem}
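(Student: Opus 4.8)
The plan is to specialize Theorem $\ref{thm2.18}$ $(1)$ and then read off each curvature integral as a sum of rotation indices through the theorem of turning tangents. Since $N=\mathbb{R}^2$ is flat, its Gaussian curvature $K_N$ vanishes identically, so the term $\int_M(K_N\circ f)\,dA$ is zero. The hypothesis $\Sigma\cap\partial M=\emptyset$ means that no singular point lies on $\partial M$, so the set $(\Sigma\cap\partial M)^{\mathrm{null}}$ is empty and the correction sum on the right-hand side vanishes. Thus Theorem $\ref{thm2.18}$ $(1)$ reduces to
\begin{equation*}
2\int_{\Sigma}\kappa_s\,ds+\int_{\partial M}\kappa_g\,ds=2\pi\chi(M).
\end{equation*}
It then suffices to prove that, after a suitable orientation is fixed on each component, $\int_{c_i}\kappa_s\,ds=2\pi I(c_i)$ and $\int_{e_i}\kappa_g\,ds=2\pi I(e_i)$ hold; summing over $i$, substituting, and dividing by $4\pi$ produces $(\ref{3.1})$.

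For the boundary contribution, note that $\Sigma\cap\partial M=\emptyset$ forces every point of $\partial M$ to be a regular point, so $f|_{\partial M}$ is an immersion and each $f(e_i)$ is a regular closed curve in $\mathbb{R}^2$. Because $D$ is the pull-back of the flat connection on $\mathbb{R}^2$ and $ds^2=f^*\langle\cdot,\cdot\rangle$, the geodesic curvature $\kappa_g$ of $e_i$ agrees with the ordinary signed curvature of the plane curve $f(e_i)$. Orienting $e_i$ by the boundary orientation induced from $M$ (the orientation implicit in the geodesic curvature measure of Theorem $\ref{thm2.18}$), the theorem of turning tangents gives $\int_{e_i}\kappa_g\,ds=2\pi I(e_i)$, where $I(e_i)$ is the rotation index of $f(e_i)$ for that orientation; this is the asserted orientation on $e_i$.

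For the singular curves, each component $c_i$ is a smooth closed curve in $M$, and its image $f(c_i)$ is a closed front in $\mathbb{R}^2$ that is regular except at the admissible singular points of the second kind lying on $c_i$, where $f(c_i)$ has cusps. Along the first-kind part, $\kappa_s=\mathrm{sgn}(d\lambda_{\gamma}(\eta))\,\kappa$, where $\kappa$ is the signed curvature of $f(c_i)$; since the first-kind points are non-degenerate, $d\lambda_{\gamma}(\eta)$ never vanishes, so its sign is constant on $c_i$. I would fix the orientation of $c_i$ to be the one for which this constant sign equals $+1$. Reversing the orientation of $\gamma$ sends $\eta$ to $-\eta$ and hence flips the sign, so this orientation is unique; it is the orientation asserted in the statement. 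With it, $\int_{c_i}\kappa_s\,ds$ equals the total signed turning of the front $f(c_i)$, and the theorem of turning tangents for fronts yields $\int_{c_i}\kappa_s\,ds=2\pi I(c_i)$. Proposition $\ref{prop2.15}$ $(2)$ guarantees that $\kappa_s\,ds$ is a bounded $1$-form along $c_i$, so the integral is finite and the contribution of the cusps is accounted for.

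The main obstacle is this last identification at the singular curves. At an admissible second-kind point the singular curvature $\kappa_s$ itself blows up, even though the measure $\kappa_s\,ds$ stays bounded by Proposition $\ref{prop2.15}$ $(2)$; the delicate point is to show that integrating $\kappa_s\,ds$ across such a cusp produces exactly the turning angle of the co-oriented tangent line of the front there, so that the total equals $2\pi I(c_i)$. One must also check carefully that the factor $\mathrm{sgn}(d\lambda_{\gamma}(\eta))$ is precisely what converts the orientation-independent quantity $\int_{c_i}\kappa_s\,ds$ (Proposition $\ref{prop2.15}$ $(1)$) into the orientation-dependent rotation index $2\pi I(c_i)$, which both forces and pins down the unique orientation of $c_i$. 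By contrast, the boundary term and the final arithmetic---substituting the two evaluations into the reduced identity and dividing by $4\pi$---are routine.
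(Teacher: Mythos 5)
Your route is the same as the paper's: specialize Theorem \ref{thm2.18} (1) using $K_{\mathbb{R}^2}=0$ and $\Sigma\cap\partial M=\emptyset$ to obtain $2\int_{\Sigma}\kappa_s\,ds+\int_{\partial M}\kappa_g\,ds=2\pi\chi(M)$, identify each integral with $2\pi$ times a sum of rotation indices, and divide by $4\pi$; the boundary term is handled exactly as in the paper. The difference lies at the singular curves, and that is where your proposal has a genuine gap. The paper closes this step by citing \cite[Proposition $5.4.6$]{9}: the singular curvature of $c_i$ equals the curvature of the planar curve $f(c_i)$, from which $\int_{c_i}\kappa_s\,ds=2\pi I(c_i)$ follows, with the cusps of the front $f(c_i)$ at second-kind points already accounted for (the continuous normal of the front turns through the cusp, and $\kappa\,ds$ measures exactly that turning). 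You instead attempt a direct argument and explicitly leave its crux---that integrating $\kappa_s\,ds$ across the cusp at an admissible second-kind point produces exactly the turning of the front there---unproven, calling it ``the main obstacle.'' Since every component $c_i$ containing second-kind points is precisely the case where the theorem has content beyond the fold-only situation, the proposal is incomplete at the decisive step; what is missing is exactly the known result the paper invokes.

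There is also a flaw in the sub-argument you do give. You claim $\mathrm{sgn}(d\lambda_{\gamma}(\eta))$ is constant on $c_i$ ``since the first-kind points are non-degenerate, so $d\lambda_{\gamma}(\eta)$ never vanishes.'' But at an admissible second-kind point, $\eta$ is parallel to $\gamma'$, and differentiating $\lambda\circ\gamma=0$ gives $d\lambda_{\gamma}(\gamma')=0$, so $d\lambda_{\gamma}(\eta)$ \emph{does} vanish there. The first-kind locus of $c_i$ is disconnected by the second-kind points, and local constancy of a sign on each arc does not imply constancy on $c_i$. The conclusion can be repaired: along $\Sigma$ the linear forms $d\lambda_{\gamma(t)}$ and $\det(\gamma'(t),\cdot\,)$ have the same kernel $\mathbb{R}\gamma'(t)$ (by non-degeneracy), hence $d\lambda_{\gamma(t)}=c(t)\det(\gamma'(t),\cdot\,)$ with $c(t)$ continuous and non-vanishing; with $\eta$ chosen so that $\{\gamma',\eta\}$ is positively oriented, $d\lambda_{\gamma}(\eta)=c(t)\det(\gamma',\eta)$ has the constant sign of $c(t)$, and reversing the orientation of $\gamma$ flips that sign, which pins down the unique orientation. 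This repair, however, still does not supply the cusp-crossing turning-angle identity, so the gap of the first paragraph remains.
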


\begin{proof}

Equipping $\mathbb{R}^2$ with Euclidean metric, we can construct a coherent tangent bundle $(M,f^*T\mathbb{R}^2,\langle\cdot,\cdot\rangle,D,df)$ induced by $f$ (see Example $\ref{ex2.10}$). By the fact that the Gaussian curvature of $\mathbb{R}^2$ is zero, and the assumption that $\Sigma$ does not intersect $\partial M$, Theorem $\ref{thm2.18}$ $(1)$ is reduced to
\begin{equation}
2\int_{\Sigma}\kappa_sds+\int_{\partial{M}}\kappa_gds=2\pi\chi(M).\label{3.2}
\end{equation}
By \cite[Proposition $5.4.6$]{9}, the curvature of $f(c_i)\ (i=1,\cdots,r)$ as a planar curve equals to the singular curvature of $c_i$. Therefore, we have
\begin{equation}
\int_{\Sigma}\kappa_sds=2\pi\sum_{i=1}^rI(c_i).\label{3.3}
\end{equation}
Furthermore, the curvature of $f(e_i)\ (i=1,\cdots,s)$ as a planar curve is equal to the geodesic curvature of $e_i$. Therefore, we have
\begin{equation}
\int_{\partial M}\kappa_gds=2\pi\sum_{i=1}^sI(e_i).\label{3.4}
\end{equation}
Finally, by substituting $(\ref{3.3})$ and $(\ref{3.4})$ into $(\ref{3.2})$, we obtain the formula $(\ref{3.1})$.\qed
\end{proof}

\begin{remark}
\label{rem3.2}

Theorem $\ref{thm3.1}$ is a generalization of the Levine formula in \cite[p. $2567$]{4} in the following sense: The formula given in \cite[p. $2567$]{4} is derived under the assumption that a $C^\infty$-map between surfaces admits only fold singular points and cusp singular points. Fold singular points and cusp singular points are typical examples of singular points of the first kind and admissible singular points of the second kind, respectively (see Example $\ref{ex2.8}$).
\end{remark}

\section{Application of Theorem $\ref{thm2.18}$ $(2)$}\label{sec4}

Let $M,N$ be compact oriented surfaces without boundary and $f:M\to N$ a $C^\infty$-map. We suppose that $f$ admits only singular points of the first kind and admissible singular points of the second kind.

We take a regular curve on $N$ such that it is transversal to $f$. By regarding this curve as the boundary of $N$, we consider $N$ as a surface with boundary. Then, the inverse $f^{-1}(\partial N)$ of the boundary $\partial N$ is a regular curve on $M$. By regarding this curve as the boundary of $M$, we consider $M$ as a surface with boundary. Hence, $f$ is a $C^\infty$-map satisfying $f^{-1}(\partial N)=\partial M$.

We suppose that the set of the singular points $\Sigma$ of $f$ is transversal to the boundary $\partial M$.

We equip $N$ with a Riemannian metric. Then, we can construct a coherent tangent bundle $(M,f^*TN,\langle\cdot,\cdot\rangle,D,df)$ induced by $f$ (see Example $\ref{ex2.10}$). Hence, Theorem $\ref{thm2.18}$ $(2)$ holds.

By $(\ref{2.2})$, we have
\begin{equation}
\int_M(K_N\circ f)d\widehat{A}=\int_Mf^*(K_NdA_N)=\mathrm{deg}(f)\int_NK_NdA_N,\label{4.1}
\end{equation}
where $\deg(f)$ is the mapping degree of $f$. By the Gauss-Bonnet theorem on $N$, $(\ref{4.1})$ is reduced to
\begin{equation}
\int_M(K_N\circ f)d\widehat{A}=\mathrm{deg}(f)\left(2\pi\chi(N)-\int_{\partial N}\kappa_g^Nds_N\right),\label{4.2}
\end{equation}
where $\kappa_g^Nds_N$ is a geodesic curvature measure on $\partial N$. On the other hand, the property of $f^{-1}(\partial N)=\partial M$ yields
\begin{equation}
\int_{\partial M\cap M^+}\kappa_gds-\int_{\partial M\cap M^-}\kappa_gds=\mathrm{deg}(f)\int_{\partial N}\kappa_g^Nds_N.\label{4.3}
\end{equation}
Hence, by substituting $(\ref{4.2})$ and $(\ref{4.3})$ into Theorem $\ref{thm2.18}$ $(2)$, we obtain
\begin{eqnarray}
\mathrm{deg}(f)\chi(N)&=&\chi(M^+)-\chi(M^-)+\#S^+-\#S^-\nonumber\\
&&+\frac{1}{2}\left(\#(\Sigma\cap\partial{M})^+-\#(\Sigma\cap\partial{M})^-\right).\label{4.4}
\end{eqnarray}
Here, by noting that $\chi(M^\pm)=\chi(\overline{M^\pm})-\chi(\Sigma)$, $(\ref{4.4})$ reduces to
\begin{eqnarray}
\mathrm{deg}(f)\chi(N)&=&\chi(\overline{M^+})-\chi(\overline{M^-})+\#S^+-\#S^-\nonumber\\
&&+\frac{1}{2}\left(\#(\Sigma\cap\partial{M})^+-\#(\Sigma\cap\partial{M})^-\right).\label{4.5}
\end{eqnarray}
Furthermore, since $\Sigma$ is transversal to $\partial M$, we have
\begin{equation}
\chi(\overline{M^+})=\chi(M)-\chi(\overline{M^-})+\chi(\Sigma)=\chi(M)-\chi(\overline{M^-})+\frac{\#(\Sigma\cap\partial M)}{2}.\label{4.6}
\end{equation}
Hence, by substituting $(\ref{4.6})$ into $(\ref{4.5})$, we obtain
\begin{eqnarray*}
\mathrm{deg}(f)\chi(N)&=&\chi(M)-2\chi(\overline{M^-})+\#S^+-\#S^-\nonumber\\
&&+\#(\Sigma\cap\partial M)^++\frac{\#(\Sigma\cap\partial M)^{\mathrm{null}}}{2}.\label{4.7}
\end{eqnarray*}
Thus, we have the following assertion.

\begin{theorem}
\label{thm4.1}

{\it Let $M,N$ be compact oriented surfaces with boundary and $f:M\to N$ a $C^\infty$-map such that $f^{-1}(\partial N)=\partial M$ and $f$ is transverse to $\partial N$. We suppose that $f$ admits only singular points of the first kind and admissible singular points of the second kind, and that the set of the singular points $\Sigma$ is transversal to the boundary $\partial{M}$. Then, the following equation holds:
\begin{eqnarray*}
\mathrm{deg}(f)\chi(N)&=&\chi(M)-2\chi(\overline{M^-})+\#S^+-\#S^-\nonumber\\
&&+\#(\Sigma\cap\partial M)^++\frac{\#(\Sigma\cap\partial M)^{\mathrm{null}}}{2}.\label{4.8}
\end{eqnarray*}}
\end{theorem}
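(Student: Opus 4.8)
The plan is to feed the hypotheses directly into the second Gauss--Bonnet type formula of Theorem~\ref{thm2.18}~$(2)$ and then eliminate every curvature integral in favour of the degree of $f$, so that what survives is a purely topological identity. First I would observe that, since $f$ admits only singular points of the first kind and admissible singular points of the second kind and $\Sigma$ is transversal to $\partial M$, Example~\ref{ex2.10} produces the induced coherent tangent bundle $(M,f^*TN,\langle\cdot,\cdot\rangle,D,df)$, so Theorem~\ref{thm2.18}~$(2)$ applies verbatim.

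The next step is to rewrite the interior curvature integral. By the definition of the signed area form in $(\ref{2.2})$ together with the change-of-variables interpretation of the mapping degree, $\int_M(K_N\circ f)\,d\widehat A=\deg(f)\int_N K_N\,dA_N$, and the classical Gauss--Bonnet theorem on the surface-with-boundary $N$ replaces $\int_N K_N\,dA_N$ by $2\pi\chi(N)-\int_{\partial N}\kappa_g^N\,ds_N$. The hypotheses $f^{-1}(\partial N)=\partial M$ and transversality of $f$ to $\partial N$ are exactly what is needed to identify the boundary contribution: the signed geodesic-curvature integral $\int_{\partial M\cap M^+}\kappa_g\,ds-\int_{\partial M\cap M^-}\kappa_g\,ds$ equals $\deg(f)\int_{\partial N}\kappa_g^N\,ds_N$. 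When these are substituted into Theorem~\ref{thm2.18}~$(2)$, the two boundary curvature terms cancel and the interior curvature term absorbs $2\pi\chi(N)$, leaving the purely combinatorial identity
\begin{equation*}
\deg(f)\chi(N)=\chi(M^+)-\chi(M^-)+\#S^+-\#S^-+\tfrac{1}{2}\left(\#(\Sigma\cap\partial M)^+-\#(\Sigma\cap\partial M)^-\right).
\end{equation*}

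It then remains to recast this into the stated form by Euler-characteristic bookkeeping. I would pass from the open regions to their closures via $\chi(M^\pm)=\chi(\overline{M^\pm})-\chi(\Sigma)$, after which the $\chi(\Sigma)$ contributions cancel in the difference $\chi(M^+)-\chi(M^-)$. To remove $\chi(\overline{M^+})$ entirely I would apply inclusion--exclusion to $M=\overline{M^+}\cup\overline{M^-}$ with $\overline{M^+}\cap\overline{M^-}=\Sigma$, giving $\chi(\overline{M^+})=\chi(M)-\chi(\overline{M^-})+\chi(\Sigma)$. Transversality of $\Sigma$ to $\partial M$ makes $\Sigma$ a compact $1$-manifold whose boundary is $\Sigma\cap\partial M$; its circle components contribute $0$ and each of its arc components has two endpoints on $\partial M$, so $\chi(\Sigma)=\tfrac{1}{2}\#(\Sigma\cap\partial M)$. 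Substituting and splitting $\#(\Sigma\cap\partial M)=\#(\Sigma\cap\partial M)^++\#(\Sigma\cap\partial M)^-+\#(\Sigma\cap\partial M)^{\mathrm{null}}$ collects the boundary-point terms into $\#(\Sigma\cap\partial M)^++\tfrac{1}{2}\#(\Sigma\cap\partial M)^{\mathrm{null}}$, which yields the asserted formula.

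The main obstacle I anticipate is the geometric-topological bookkeeping rather than any single analytic estimate: one must justify that the geodesic-curvature integral over $\partial M=f^{-1}(\partial N)$ decomposes along $M^\pm$ so as to match $\deg(f)\int_{\partial N}\kappa_g^N\,ds_N$ with the correct signs, and one must verify both the relation $\chi(M^\pm)=\chi(\overline{M^\pm})-\chi(\Sigma)$ coming from the chosen triangulation and the transversality evaluation $\chi(\Sigma)=\tfrac{1}{2}\#(\Sigma\cap\partial M)$. These are precisely the places where the hypotheses $f^{-1}(\partial N)=\partial M$, transversality of $f$ to $\partial N$, and transversality of $\Sigma$ to $\partial M$ do all the work.
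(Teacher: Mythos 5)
Your proposal is correct and follows essentially the same route as the paper: apply Theorem~\ref{thm2.18}~$(2)$ to the induced coherent tangent bundle, convert the interior term via $\int_M(K_N\circ f)d\widehat{A}=\deg(f)\int_NK_NdA_N$ and Gauss--Bonnet on $N$, cancel the boundary geodesic-curvature terms using $f^{-1}(\partial N)=\partial M$, and then perform the identical Euler-characteristic bookkeeping ($\chi(M^\pm)=\chi(\overline{M^\pm})-\chi(\Sigma)$, inclusion--exclusion, and $\chi(\Sigma)=\tfrac{1}{2}\#(\Sigma\cap\partial M)$ from transversality). The combinatorial reduction collecting the boundary singular points into $\#(\Sigma\cap\partial M)^++\tfrac{1}{2}\#(\Sigma\cap\partial M)^{\mathrm{null}}$ matches the paper's computation exactly.
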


\begin{remark}
\label{rem4.2}

Theorem $\ref{thm4.1}$ is a generalization of the Quine-Fukuda-Ishikawa formula in \cite[Theorem $1.1$]{3} in the following sense: The formula given in \cite[Theorem $1.1$]{3} is derived under the assumption that a $C^\infty$-map between surfaces admits only fold singular points and cusp singular points. Fold singular points and cusp singular points are typical examples of singular points of the first kind and admissible singular points of the second kind, respectively (see Example $\ref{ex2.8}$).
\end{remark}

\end{document}